\newcommand*\from{\colon}
\newtheorem{thm}{Theorem}
\newtheorem{prop}[thm]{Proposition}
\newtheorem{defin}[thm]{Definition}
\newcommand{\cl}{\mathrm{cl}}
\begin{document}

\title{Enumeration of $2$-Polymatroids on up to Seven Elements}
\author{Thomas J. Savitsky}
\address{Department of Mathematics\\
The George Washington University\\
Washington, DC  20052}
\email{savitsky@gwmail.gwu.edu}
\begin{abstract}A theory of single-element extensions of integer polymatroids
analogous to that of matroids is developed.  We present an algorithm to
generate a catalog of $2$-polymatroids, up to isomorphism.  When we
implemented this algorithm on a computer, obtaining all $2$-polymatroids
on at most seven elements, we discovered the surprising fact that
the number of $2$-polymatroids on seven elements fails to be unimodal in rank.
\end{abstract}
\maketitle

\thispagestyle{empty}


\section{Introduction}

A $k$-polymatroid is a generalization of a matroid in which
the rank of an element may be greater than $1$ but cannot exceed $k$.  
Precise definitions are given in the next section.
Polymatroids have applications in mathematics and computer science.  For example,
Chapter 11 of \cite{lovaszplummer} employs $2$-polymatroids in the study of
matching theory.  Polymatroids, and more generally, submodular functions, arise
in combinatorial optimization; see Part IV of \cite{schrijverB}.
We take the perspective that $k$-polymatroids are worth studying in their own right.

Although much work has been done with the use of computers on the enumeration
of small matroids, to our knowledge, none has been done on enumerating
$k$-polymatroids, where $k > 1$.  Some landmark results in matroid enumeration
include the following: in 1973, Blackburn, Crapo, and Higgs
\cite{eightels} published a catalog of all simple matroids on at most eight elements;
in 2008, Mayhew and Royle \cite{nineels} produced a catalog of all matroids
on up to nine elements; and in 2012, Matsumoto, Moriyama, Imai,
and Bremner \cite{matsumotoenum} enumerated all rank-$4$ matroids on
ten elements.  

In this paper, we describe our success in adapting the approach used by Mayhew
and Royle to $2$-polymatroids.  Using a desktop computer, we produced a catalog
of all $2$-polymatroids, up to isomorphism, on at most seven elements.  We
were surprised to discover that the number of $2$-polymatroids on 
seven elements is not unimodal in rank.

\section{Background}

For an introduction to polymatroids, we recommend Chapter 12 of
\cite{oxleybook}.  We begin our discussion with definitions.

\begin{defin}
Let $S$ be a finite set.  Suppose 
$\rho \from 2^S \to \mathbb{N}$ satisfies the following three conditions:
\begin{enumerate}[(i)]
\item if $X, Y \subseteq S$, then 
$\rho(X \cap Y) + \rho(X \cup Y) \le \rho(X) + \rho(Y)$
\emph{(submodular)},
\item if $X \subseteq Y \subseteq S$, then $\rho(X) \le \rho(Y)$
\emph{(monotone)}, and
\item $\rho(\varnothing) = 0$
\emph{(normalized)}.
\end{enumerate}
Then $(\rho, S)$ is termed an \emph{integer polymatroid} or simply a 
\emph{polymatroid} with \emph{rank function} $\rho$ and 
\emph{ground set} $S$.
\end {defin}

\begin{defin}
Let $k$ be a positive integer, and let $(\rho, S)$ be a polymatroid.  Suppose
that $\rho(x) \le k$ for every $x \in S$.  Then $(\rho, S)$ is a 
\emph{$k$-polymatroid}.  A \emph{matroid} may be defined as a $1$-polymatroid.
\end{defin}

Let $(\rho, S)$ and $(\tau, T)$ be polymatroids.  A function
$\sigma \from S \to T$ is an \emph{isomorphism} of polymatroids
if $\sigma$ is a bijection and if $\rho(X) = \tau(\sigma(X))$ for
every $X \subseteq S$.
The closure operator of a polymatroid may be defined exactly as that of a matroid.

\begin{defin}
The \emph{closure operator} $\cl:2^S\to 2^S$ of a
polymatroid $(\rho,S)$ is given by \\
$\cl_{\rho}(X) = \{x: \rho(X \cup x) = \rho(X) \}$ for $X\subseteq S$.
The set $\cl_{\rho}(X)$ is called the \emph{closure} of $X$ with respect to $\rho$.
The subscript is omitted when $\rho$ is clear from context.
\end{defin}

One can show that $\rho(X) = \rho(\cl(X))$ by induction on $|\cl(X) - X|$.
We will freely make use of this as well as the following  properties of
closure operators.  They are stated here without proof.

\begin{prop} The closure operator of a polymatroid $(\rho,S)$
satisfies the following three properties:
\begin{enumerate}[(i)]
\item $X\subseteq \cl(X)$ for all $X\subseteq S$
\emph{(increasing)},
\item if $X\subseteq Y\subseteq S$, then $\cl(X)\subseteq \cl(Y)$
\emph{(monotone)}, and
\item $\cl(X) = \cl(\cl(X))$ for all $X\subseteq S$
\emph{(idempotent)}.
\end{enumerate}
\end{prop}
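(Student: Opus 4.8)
The plan is to verify the three properties one at a time, drawing only on the defining axioms of a polymatroid (submodularity, monotonicity, normalization), the definition of $\cl$, and the already-noted identity $\rho(\cl(X)) = \rho(X)$.

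Property (i) is immediate from the definition: if $x \in X$, then $X \cup x = X$, so $\rho(X \cup x) = \rho(X)$ and hence $x \in \cl(X)$. For property (ii), suppose $X \subseteq Y \subseteq S$ and let $x \in \cl(X)$, so that $\rho(X \cup x) = \rho(X)$. If $x \in Y$, then $x \in Y \subseteq \cl(Y)$ by part (i), so we may assume $x \notin Y$. Now apply the submodular inequality to the pair $X \cup x$ and $Y$: their intersection is $X$ (here is where $x \notin Y$ and $X \subseteq Y$ are used) and their union is $Y \cup x$, so $\rho(X) + \rho(Y \cup x) \le \rho(X \cup x) + \rho(Y) = \rho(X) + \rho(Y)$, whence $\rho(Y \cup x) \le \rho(Y)$. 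Combined with monotonicity of $\rho$ (which gives $\rho(Y) \le \rho(Y \cup x)$), this yields $\rho(Y \cup x) = \rho(Y)$, i.e. $x \in \cl(Y)$. Thus $\cl(X) \subseteq \cl(Y)$.

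For property (iii), the inclusion $\cl(X) \subseteq \cl(\cl(X))$ is just part (i) applied to the set $\cl(X)$. For the reverse inclusion, let $x \in \cl(\cl(X))$, so $\rho(\cl(X) \cup x) = \rho(\cl(X)) = \rho(X)$, the last equality being the noted identity $\rho(\cl(X)) = \rho(X)$. Since $X \subseteq \cl(X)$ (part (i)), monotonicity of $\rho$ gives $\rho(X \cup x) \le \rho(\cl(X) \cup x) = \rho(X)$, while monotonicity also gives $\rho(X) \le \rho(X \cup x)$. Hence $\rho(X \cup x) = \rho(X)$, so $x \in \cl(X)$, and $\cl(\cl(X)) \subseteq \cl(X)$. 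Equality follows.

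The only step requiring any thought is part (ii): one must select the correct pair of sets to feed into the submodular inequality so that the intersection collapses to $X$ and the union to $Y \cup x$, and one must separately dispose of the degenerate case $x \in Y$ (for which the intersection would instead be $X \cup x$). Parts (i) and (iii) are essentially bookkeeping with the definition of closure and the monotonicity axiom, with (iii) leaning on the $\rho(\cl(X)) = \rho(X)$ identity supplied just before the statement.
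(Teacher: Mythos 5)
Your proof is correct. Note that the paper itself gives no argument for this proposition -- it explicitly states these properties ``without proof'' -- so there is nothing to compare against; your write-up simply supplies the standard verification. Each step checks out: (i) is immediate from the definition of $\cl$; in (ii) the submodular inequality applied to $X\cup x$ and $Y$ (with the harmless case $x\in Y$ handled separately, though one could also avoid the split by bounding $\rho((X\cup x)\cap Y)\ge\rho(X)$ via monotonicity) gives exactly $\rho(Y\cup x)\le\rho(Y)$; and (iii) correctly leans on the identity $\rho(\cl(X))=\rho(X)$, which the paper states just before the proposition and whose own proof does not depend on the proposition, so there is no circularity. This is a complete and appropriately economical proof of the omitted result.
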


A subset of the ground set that is maximal with respect to rank is called a
\emph{flat}.  Here is the definition in terms of the closure operator.

\begin{defin}
Let $(\rho, S)$ by a polymatroid. A set $X \subseteq S$ is called
a \emph{flat} of $\rho$ if $\cl(X) = X$.  The collection of flats
of $(\rho, S)$ is symbolized by $\mathcal{F}(\rho, S)$.
\end{defin}

Intersections of flats of matroids are themselves flats, and the same is 
true for polymatroids.

\begin{prop}If $F$ and $G$ are flats of polymatroid $(\rho, S)$, 
then $F \cap G$ is also a flat.
\end{prop}
\begin{proof}
Let $x \in S - (F \cap G)$.  Either $x \in S - F$ or 
$x \in S - G$.  By relabeling $F$ and $G$ if necessary,
we may assume $x \in S - F$.  By submodularity,
\begin{equation*}
\rho(F) + \rho((F \cap G) \cup x) \ge \rho(F \cup x) + \rho(F \cap G).
\end{equation*}
This implies $\rho((F \cap G) \cup x) - \rho(F \cap G) \ge
\rho(F \cup x) - \rho(F)$.  By assumption, $\rho(F \cup x) - \rho(F) > 0$, and
hence, as needed, $\rho((F\cap G)\cup x) - \rho(F\cap G)>0$.
\end{proof}

Since the entire ground set of a polymatroid is a flat, we 
see that the collection of flats of a polymatroid forms a 
lattice under set-inclusion.

The theory of single-element extensions of matroids was developed 
by Crapo in \cite{craposingleel}.  We extend this theory to polymatroids in the
next section, but first the matroid case is briefly reviewed here.  See 
Section 7.2 of \cite{oxleybook} for a detailed exposition.  We begin with
a couple of definitions that apply to polymatroids as well.

\begin{defin}
Let $(\rho, S)$ be a polymatroid, and let $e$ be an element
not in $S$.  If $(\bar{\rho},S\cup e)$
is a polymatroid with $\bar{\rho}(X)=\rho(X)$ for all $X\subseteq S$,
then $\bar{\rho}$ is a \emph{single-element extension} of $\rho$.
\end{defin}

\begin{defin}
A \emph{modular cut} of a polymatroid $(\rho,S)$ is a subset $\mathcal{M}$ of
$\mathcal{F}(\rho,S)$ for which
\begin{enumerate}[(i)]
\item if $F \in \mathcal{M}$, $G \in \mathcal{F}(\rho,S)$,
and $F \subseteq G$, then $G \in \mathcal{M}$, and
\item if $F, G \in \mathcal{M}$ and 
$\rho(F \cap G) + \rho(F \cup G) = \rho(F) + \rho(G)$,
then $F \cap G \in \mathcal{M}$.
\end{enumerate}
\end{defin}

The next two results show that single-element extensions of a matroid 
can be placed in one-to-one correspondence with its modular cuts.  This 
correspondence underlies the enumeration efforts in 
\cite{eightels} and \cite{nineels}. 

\begin{thm}\label{matroid_extA}
Suppose $(r, S)$ is a matroid with single-element extension
$(\bar{r}, S \cup e)$.  Define \\
$\mathcal{M} = \{F \in \mathcal{F}(r, S) : r(F) = \bar{r}(F \cup e)\}$.
Then $\mathcal{M}$ is a modular cut.
\end{thm}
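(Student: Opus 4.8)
The plan is to recast membership in $\mathcal{M}$ in terms of the closure operator of the extension, and then let the closure axioms and submodularity do the work. First I would record the basic observation that for any $X \subseteq S$, submodularity of $\bar r$ together with $\bar r(e) \le 1$ gives $\bar r(X \cup e) - \bar r(X) \le \bar r(e) - \bar r(\varnothing) \le 1$, so by monotonicity $\bar r(X \cup e) \in \{\,r(X),\, r(X)+1\,\}$. Since $\bar r(F) = r(F)$ for $F \subseteq S$, a flat $F$ of $r$ lies in $\mathcal{M}$ precisely when $\bar r(F \cup e) = r(F)$, i.e. precisely when $e \in \cl_{\bar r}(F)$.

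Next I would check condition (i) of the definition of modular cut. Suppose $F \in \mathcal{M}$, $G \in \mathcal{F}(r,S)$, and $F \subseteq G$. Then $e \in \cl_{\bar r}(F) \subseteq \cl_{\bar r}(G)$ by monotonicity of the closure operator, so $\bar r(G \cup e) = \bar r(G) = r(G)$, and hence $G \in \mathcal{M}$.

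Then I would check condition (ii). Let $F, G \in \mathcal{M}$ with $r(F \cap G) + r(F \cup G) = r(F) + r(G)$. The key step is to apply submodularity of $\bar r$ not to $F$ and $G$ but to $F \cup e$ and $G \cup e$; since $(F \cup e) \cap (G \cup e) = (F \cap G) \cup e$ and $(F \cup e) \cup (G \cup e) = (F \cup G) \cup e$, this yields
\[
\bar r\bigl((F \cap G) \cup e\bigr) + \bar r\bigl((F \cup G) \cup e\bigr) \;\le\; \bar r(F \cup e) + \bar r(G \cup e) \;=\; r(F) + r(G).
\]
On the other hand, monotonicity of $\bar r$ gives $\bar r((F \cap G) \cup e) \ge r(F \cap G)$ and $\bar r((F \cup G) \cup e) \ge r(F \cup G)$, and by hypothesis these two lower bounds sum to $r(F) + r(G)$. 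Squeezing forces both inequalities to be equalities, so in particular $\bar r((F \cap G) \cup e) = r(F \cap G)$. Finally, $F \cap G$ is a flat by the earlier proposition on intersections of flats, so $F \cap G \in \mathcal{M}$, as required.

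I do not expect a genuine obstacle here. The only point requiring a little foresight is the decision to apply submodularity to $F \cup e$ and $G \cup e$ rather than to $F$ and $G$, so that $e$ is absorbed simultaneously into the meet and the join, and then to read off the individual rank values from the sum using monotonicity. One should also be careful not to omit the appeal to the intersection-of-flats proposition, since that is what makes $F \cap G$ eligible for membership in $\mathcal{M}$ in the first place.
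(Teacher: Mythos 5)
Your proposal is correct: the reduction of membership in $\mathcal{M}$ to the condition $e \in \cl_{\bar r}(F)$, the use of monotonicity of $\cl_{\bar r}$ for up-closure, and the squeeze obtained by applying submodularity to $F \cup e$ and $G \cup e$ together with the modular-pair hypothesis all go through, and you rightly invoke the proposition that an intersection of flats is a flat so that $F \cap G$ is eligible for $\mathcal{M}$. For comparison: the paper never proves Theorem~\ref{matroid_extA} directly --- it is stated as background with a pointer to Crapo and to Section 7.2 of Oxley, and the paper later remarks that it is subsumed by the general polymatroid result, Theorem~\ref{thmB}. Your argument is essentially the necessity half of the paper's proof of Theorem~\ref{thmB} specialized to matroids: your condition (ii) computation is exactly the paper's verification of its condition (I) (submodularity applied to $F \cup e$ and $G \cup e$, then rearranged using the modular defect), while for up-closure the paper instead applies submodularity to the pair $F \cup e$ and $G$ to get its condition (III), whereas you use monotonicity of the closure operator of $\bar r$ --- an equivalent and slightly slicker route in the matroid setting, though it leans on closure properties the paper states without proof. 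What your direct proof buys is a self-contained verification for the matroid case; what the paper's route buys is that the same computations, done once at the polymatroid level, yield Theorems~\ref{matroid_extA} and~\ref{matroid_extB} simultaneously as the special case $c \le 1$.
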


\begin{thm}\label{matroid_extB}
Suppose $(r,S)$ is a matroid, $e$ is an element not in $S$, and
$\mathcal{M} \subseteq \mathcal{F}(r,S)$ is a modular cut.  Define $\bar{r}
\from 2^{S \cup e} \to \mathbb{N}$ as follows: for $X\subseteq S$,
set $\bar{r}(X) = r(X)$ and
\begin{equation*}
\bar{r}(X \cup e) =
\begin{cases}
    r(X) & \text{if} \,\, \cl(X) \in\mathcal{M},\\
    r(X) + 1 & \text{otherwise.}
\end{cases}
\end{equation*}
Then $(\bar{r}, S \cup e)$ is a matroid
and a single-element extension of $(r, S)$.
\end{thm}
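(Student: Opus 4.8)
The plan is to verify directly that $\bar r$ is a polymatroid rank function satisfying $\bar r(x) \le 1$ for every $x \in S \cup e$; since the paper defines a matroid to be a $1$-polymatroid, this establishes the theorem, and the single-element extension claim is then immediate because $\bar r$ restricts to $r$ on $2^S$ by construction. Normalization is clear since $\bar r(\varnothing) = r(\varnothing) = 0$, and the bound $\bar r(x) \le 1$ holds because $\bar r(x) = r(x) \le 1$ for $x \in S$, while $\bar r(e) \in \{r(\varnothing), r(\varnothing) + 1\} = \{0, 1\}$. It is convenient to write $\delta(X) = \bar r(X \cup e) - r(X) \in \{0, 1\}$ for $X \subseteq S$, so that $\delta(X) = 0$ if and only if $\cl(X) \in \mathcal{M}$. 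I would first record two facts: (a) the standard matroid lemma that if $X \subseteq Y \subseteq S$ with $r(X) = r(Y)$, then $\cl(X) = \cl(Y)$ (one shows $Y \subseteq \cl(X)$ from $r(\cl(X) \cup Y) = r(\cl(X \cup Y)) = r(\cl(Y)) = r(X) = r(\cl(X))$); and (b), using only condition (i) of the modular-cut definition, that $\cl(Z) \subseteq \cl(W)$ implies $\delta(Z) \ge \delta(W)$, since $\delta(Z) = 0$ forces $\cl(Z) \in \mathcal{M}$, hence $\cl(W) \in \mathcal{M}$ by upward closure. In particular $\delta(X \cup Y) \le \min\{\delta(X), \delta(Y)\}$ for all $X, Y \subseteq S$.

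For monotonicity of $\bar r$, all cases of $A \subseteq B \subseteq S \cup e$ are trivial consequences of the monotonicity of $r$ except when $e \in A$, in which case $A = X \cup e$ and $B = Y \cup e$ with $X \subseteq Y \subseteq S$, and we must show $\bar r(X \cup e) \le \bar r(Y \cup e)$. If $\delta(X) = 0$ this reads $r(X) \le \bar r(Y \cup e)$, which holds as $\bar r(Y \cup e) \ge r(Y) \ge r(X)$. If $\delta(X) = 1$, then either $\bar r(Y \cup e) = r(Y) + 1 \ge r(X) + 1 = \bar r(X \cup e)$, or $\bar r(Y \cup e) = r(Y)$, and in the latter case $r(X) < r(Y)$ — otherwise fact (a) gives $\cl(X) = \cl(Y) \in \mathcal{M}$, contradicting $\delta(X) = 1$ — so that $\bar r(X \cup e) = r(X) + 1 \le r(Y) = \bar r(Y \cup e)$.

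For submodularity I would split on how many of the two sets contain $e$. If neither contains $e$, this is submodularity of $r$. If exactly one contains $e$, the inequality reduces to $r(X \cap Y) + r(X \cup Y) + \delta(X \cup Y) \le r(X) + r(Y) + \delta(X)$, which follows from submodularity of $r$ together with $\delta(X \cup Y) \le \delta(X)$ from fact (b). When both sets contain $e$, writing them as $X \cup e$ and $Y \cup e$ reduces the claim to
\begin{equation*}
\delta(X \cap Y) + \delta(X \cup Y) \le \delta(X) + \delta(Y) + g, \qquad g := r(X) + r(Y) - r(X \cap Y) - r(X \cup Y) \ge 0.
\end{equation*}
If $g \ge 2$ the left-hand side is at most $2$; if $g = 1$ we combine $\delta(X \cup Y) \le \delta(X)$ with $\delta(X \cap Y) \le 1$ and $\delta(Y) \ge 0$. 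The remaining case $g = 0$, where $(X, Y)$ is a modular pair, is the heart of the matter and the only place condition (ii) of the modular-cut definition enters: the inequality can fail only if $\delta(X \cap Y) = 1$ while $\delta(X) = \delta(Y) = 0$, so I must deduce $\cl(X \cap Y) \in \mathcal{M}$ from $\cl(X), \cl(Y) \in \mathcal{M}$. Putting $F = \cl(X)$ and $G = \cl(Y)$, one checks that $r(F \cup G) = r(X \cup Y)$ (as $X \cup Y \subseteq F \cup G \subseteq \cl(X \cup Y)$) and, using submodularity of $r$, the relation $g = 0$, and $X \cap Y \subseteq F \cap G$, that $r(F \cap G) = r(X \cap Y)$; hence $(F, G)$ is a modular pair of flats and condition (ii) yields $F \cap G \in \mathcal{M}$. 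Since $F \cap G$ is a flat (an intersection of flats) containing $X \cap Y$ with $r(F \cap G) = r(X \cap Y)$, fact (a) gives $\cl(X \cap Y) = F \cap G \in \mathcal{M}$, as required.

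I expect the $g = 0$ step — transferring the modular-pair condition from the sets $X, Y$ to their closures so that condition (ii) applies — to be the main obstacle; the rest is routine bookkeeping with the definitions and with facts (a) and (b). Once submodularity is established, $\bar r$ is a matroid rank function, and, as noted, $(\bar r, S \cup e)$ is a single-element extension of $(r, S)$.
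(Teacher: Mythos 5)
Your proof is correct, but it follows a different route from the paper. The paper never proves Theorem~\ref{matroid_extB} directly: it is stated as classical background (Crapo's theory, with a pointer to Section 7.2 of \cite{oxleybook}), and the paper's own justification is the remark after Theorem~\ref{thmB} that the general polymatroid extension theorem specializes to it --- take the extensible partition $(\mathcal{M}_0,\mathcal{M}_1)$ with $\mathcal{M}_0=\mathcal{M}$ and $\mathcal{M}_1=\mathcal{F}(r,S)-\mathcal{M}$, and check that the modular-cut axioms give conditions (I)--(III). You instead verify the rank axioms for $\bar r$ from scratch, and all the steps check out: fact (a) (equal rank of nested sets forces equal closures) and fact (b) (upward closure of $\mathcal{M}$ makes $\delta$ monotone under closure containment) are sound, the monotonicity case $\delta(X)=1$, $\delta(Y)=0$ is handled correctly via fact (a), and the decisive $g=0$ case correctly transfers the modular pair from $(X,Y)$ to the flats $(\cl(X),\cl(Y))$ so that condition (ii) of the modular-cut definition applies, then pulls the conclusion back to $\cl(X\cap Y)$ via fact (a). Your argument is essentially the classical self-contained proof; what the paper's approach buys is generality (the same bookkeeping, organized through the function $\mu$ and the modular defect $\delta(F,G)$, covers arbitrary integer polymatroids and rank increments $c>1$, where your case split on $g\in\{0,1,\ge 2\}$ is absorbed into condition (I)), while your version buys independence from that machinery and makes explicit exactly where each half of the modular-cut definition is used.
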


Our final definition in this section will be used when we describe the flats of
single-element extensions.

\begin{defin}Let $F$ and $G$ be flats of a polymatroid $(\rho, S)$.
Suppose that $F \subsetneq G$ and that for any
flat $H$ with $F \subseteq H \subseteq G$, either $H = F$ or $H = G$.
Then we say that $G$ \emph{covers} $F$.
\end{defin}

\section{Single-Element Extensions of Polymatroids}

Given a polymatroid, our aim is to describe all of its single-element
extensions. As in the matroid case we may restrict our attention to flats
of the original polymatroid.  Suppose $(\bar{\rho}, S \cup e)$ is
a single-element extension of $(\rho, S)$.  The following proposition
shows that if the value of $\bar{\rho}(F \cup e)$ is known for every
flat $F$ of $(\rho, S)$, then $\bar{\rho}$ is completely determined.

\begin{prop}\label{propA}Suppose $(\bar{\rho}, S \cup e)$ is
a single-element extension of $(\rho, S)$.  Let $X \subseteq S$, and
let $\cl(X)$ be the closure of $X$ with respect to $\rho$ 
(not $\bar{\rho}$). Then 
$\bar{\rho}(X \cup e) = \bar{\rho}(\cl(X) \cup e)$.
\end{prop}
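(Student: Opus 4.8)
The plan is to show $\bar\rho(X \cup e) = \bar\rho(\cl(X) \cup e)$ by establishing both inequalities, relying on monotonicity for one direction and on submodularity of $\bar\rho$ for the other. Since $X \subseteq \cl(X)$, we immediately get $X \cup e \subseteq \cl(X) \cup e$, so monotonicity of $\bar\rho$ yields $\bar\rho(X \cup e) \le \bar\rho(\cl(X) \cup e)$. The work is all in the reverse inequality.

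For the reverse direction, I would first reduce to adding a single element of $\cl(X) - X$ at a time and then induct on $|\cl(X) - X|$, exactly as in the hint the authors give for proving $\rho(X) = \rho(\cl(X))$. So it suffices to show: if $y \in \cl(X) - X$ (closure with respect to $\rho$), then $\bar\rho((X \cup y) \cup e) = \bar\rho(X \cup e)$; the full claim then follows by a routine induction, since $\cl(X\cup y) = \cl(X)$ and the symmetric difference strictly decreases. To prove this single-step statement, apply submodularity of $\bar\rho$ to the sets $X \cup e$ and $X \cup y$:
\begin{equation*}
\bar\rho((X \cup e) \cap (X \cup y)) + \bar\rho((X \cup e) \cup (X \cup y)) \le \bar\rho(X \cup e) + \bar\rho(X \cup y).
\end{equation*}
The intersection is $X$ and the union is $X \cup y \cup e$, so this reads
\begin{equation*}
\bar\rho(X) + \bar\rho(X \cup y \cup e) \le \bar\rho(X \cup e) + \bar\rho(X \cup y).
\end{equation*}
Now $\bar\rho$ agrees with $\rho$ on subsets of $S$, and $y \in \cl_\rho(X)$ means $\rho(X \cup y) = \rho(X)$, hence $\bar\rho(X \cup y) = \bar\rho(X)$. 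Substituting and cancelling $\bar\rho(X)$ gives $\bar\rho(X \cup y \cup e) \le \bar\rho(X \cup e)$. Combined with monotonicity ($X \cup e \subseteq X \cup y \cup e$), this yields equality for the single step.

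The main obstacle — really the only point requiring care — is making sure the closure $\cl(X)$ throughout is taken with respect to $\rho$ and not $\bar\rho$, so that the identity $\rho(X \cup y) = \rho(X)$ for $y \in \cl_\rho(X)$ is available, and that the value $\bar\rho(X\cup y)$ can legitimately be replaced by $\rho(X\cup y)$ because $X \cup y \subseteq S$. Once the single-element step is in hand, I would finish by writing $\cl(X) - X = \{y_1, \dots, y_m\}$, setting $X_0 = X$ and $X_i = X_{i-1} \cup y_i$, observing that each $y_i \in \cl_\rho(X) = \cl_\rho(X_{i-1})$ so the step applies at every stage, and concluding $\bar\rho(X \cup e) = \bar\rho(X_0 \cup e) = \cdots = \bar\rho(X_m \cup e) = \bar\rho(\cl(X) \cup e)$.
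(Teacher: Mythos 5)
Your proof is correct, but it takes a different route from the paper's. The paper's argument is a one-line sandwich: it observes that $X\cup e\subseteq \cl_{\rho}(X)\cup e=\cl_{\bar{\rho}}(X)\cup e\subseteq \cl_{\bar{\rho}}(X\cup e)$ (using that $\bar{\rho}$ agrees with $\rho$ on subsets of $S$, so the two closure operators restricted to $S$ coincide), and then invokes the previously noted fact that a polymatroid's rank function takes the same value on a set and on its closure, applied to $\bar{\rho}$ and the set $X\cup e$; monotonicity then forces the middle set $\cl_{\rho}(X)\cup e$ to have the same rank as the outer two. You never touch $\cl_{\bar{\rho}}$ at all: instead you unfold the underlying induction, adding one element $y\in\cl_{\rho}(X)-X$ at a time and using submodularity of $\bar{\rho}$ on the pair $X\cup e$, $X\cup y$ together with $\bar{\rho}(X\cup y)=\rho(X\cup y)=\rho(X)$ to get $\bar{\rho}(X\cup y\cup e)\le\bar{\rho}(X\cup e)$, with monotonicity giving the reverse inequality; your bookkeeping that $\cl_{\rho}(X_{i-1})=\cl_{\rho}(X)$ at each stage is the right justification for iterating the step. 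What the paper's proof buys is brevity, at the cost of the slightly subtle identification $\cl_{\rho}(X)\cup e=\cl_{\bar{\rho}}(X)\cup e$ and reliance on the closure-preserves-rank lemma (itself proved by exactly the kind of induction you carry out); what yours buys is a self-contained, elementary argument that only uses submodularity and monotonicity of $\bar{\rho}$ and the $\rho$-closure, which is arguably more transparent even if longer.
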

\begin{proof}
Since
$X\cup e\subseteq \cl(X)\cup e = \cl_{\bar{\rho}}(X)\cup e\subseteq
\cl_{\bar{\rho}}(X\cup e)$ and $\bar{\rho}$ has the same value on the
first and last of these sets, the result follows.
\end{proof}

For a single-element extension $(\bar{\rho},S\cup e)$
of $(\rho,S)$, let $c$ be $\bar{\rho}(e)$
and let $X \subseteq S$.  It follows that $\bar{\rho}(X \cup e) \le
\rho(X) + c$ by the submodularity and normalization of
$\bar{\rho}$. Therefore, we may partition the flats of $(\rho, S)$ into
classes $\mathcal{M}_0, \mathcal{M}_1, \ldots, \mathcal{M}_c$ by the 
rule $F \in \mathcal{M}_i$ if and only if
$\bar{\rho}(F \cup e) = \rho(F) + i$.  (Note that some $\mathcal{M}_i$
may be empty.)  By Proposition~\ref{propA}, knowledge of 
$(\rho, S)$ and the partition 
$(\mathcal{M}_0, \mathcal{M}_1, \ldots, \mathcal{M}_c)$ completely
determines $(\bar{\rho}, S \cup e)$.  Our goal is to develop properties
that characterize such partitions. The following definition will be useful.

\begin{defin}Let $(\rho, S)$ be a polymatroid, and let $X, Y \subseteq S$.
Define the \emph{modular defect} of $X$ and $Y$,
denoted $\delta(X, Y)$, to be
$\rho(X) + \rho(Y) - \rho(X \cup Y) - \rho(X \cap Y)$.  If $\delta(X, Y) = 0$,
then $X$ and $Y$ are a \emph{modular pair} of sets.
\end{defin}

Now suppose $(\mathcal{M}_0, \mathcal{M}_1, \ldots, \mathcal{M}_c)$ is a
partition of $\mathcal{F}(\rho, S)$.  Let $e$ be an element
not in $S$ and define $\bar{\rho} \from 2^{S\cup e}\to \mathbb{N}$ as
follows: for $X\subseteq S$, set $\bar{\rho}(X)=\rho(X)$ and, if
$\cl(X)\in \mathcal{M}_i$, then set $\bar{\rho}(X\cup e)=\rho(X)+i$.
Furthermore, define a function $\mathcal{\mu} \from 2^{S} \to \mathbb{N}$ by
$\mathcal{\mu}(X) = i$ if $\cl(X) \in \mathcal{M}_i$.

\begin{thm}\label{thmB}As defined above, $(\bar{\rho}, S \cup e)$
is a polymatroid, and hence a single-element extension of $(\rho, S)$,
if and only if the following three conditions hold for all flats $F, G$ 
of $(\rho, S)$:
\begin{enumerate}[(I)]
\item $\mu(F \cap G) + \mu(F \cup G) - \delta(F, G) \le
\mu(F) + \mu(G)$,
\item if $F \subseteq G$, then 
$\rho(F) + \mu(F) \leq \rho(G) + \mu(G)$, and
\item if $F \subseteq G$, then $\mu(G) \le \mu(F)$.
\end{enumerate}
\end{thm}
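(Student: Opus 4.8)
The plan is to verify the three polymatroid axioms for $(\bar{\rho},S\cup e)$ directly, reducing each to an inequality among subsets of $S\cup e$ and splitting into cases according to whether $e$ lies in the sets involved. Normalization holds since $\bar{\rho}(\varnothing)=\rho(\varnothing)=0$, and $\bar{\rho}$ is nonnegative and integer-valued by construction, so the content lies in monotonicity and submodularity; and since $\bar{\rho}$ agrees with $\rho$ on subsets of $S$, where both properties already hold, it suffices to treat sets at least one of which contains $e$. Throughout I will use freely that $\rho(X)=\rho(\cl(X))$ and $\mu(X)=\mu(\cl(X))$.

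For the ``only if'' direction, assume $(\bar{\rho},S\cup e)$ is a polymatroid; then (I)--(III) come out by substituting well-chosen sets into its monotonicity and submodularity inequalities. For flats $F\subseteq G$, monotonicity applied to $F\cup e\subseteq G\cup e$ gives (II), and submodularity applied to $G$ and $F\cup e$ --- whose intersection is $F$ and whose union is $G\cup e$ --- gives (III). For arbitrary flats $F,G$, submodularity applied to $F\cup e$ and $G\cup e$, using $(F\cup e)\cap(G\cup e)=(F\cap G)\cup e$ and the fact that $F\cap G$ is a flat, yields $\rho(F\cap G)+\mu(F\cap G)+\rho(F\cup G)+\mu(F\cup G)\le\rho(F)+\mu(F)+\rho(G)+\mu(G)$, which rearranges to (I) once $\rho(F)+\rho(G)-\rho(F\cup G)-\rho(F\cap G)$ is recognized as $\delta(F,G)$.

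For the ``if'' direction, assume (I)--(III). Monotonicity: for $X\subseteq Y\subseteq S$ the needed inequality $\rho(X)+\mu(X)\le\rho(Y)+\mu(Y)$ is just (II) for the flats $\cl(X)\subseteq\cl(Y)$; when $e$ lies in the larger set but not the smaller the inequality follows from monotonicity of $\rho$ together with $\mu\ge 0$. Submodularity has three cases. When neither set contains $e$ it is submodularity of $\rho$. When exactly one does, say $A\subseteq S$ and $B=Y\cup e$, we need $\rho(A\cap Y)+\rho(A\cup Y)+\mu(A\cup Y)\le\rho(A)+\rho(Y)+\mu(Y)$, which follows from submodularity of $\rho$ and from $\mu(A\cup Y)\le\mu(Y)$, the latter being (III) for the flats $\cl(Y)\subseteq\cl(A\cup Y)$.

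The main obstacle is the last case of submodularity, where $A=X\cup e$ and $B=Y\cup e$: here we must show $\rho(X\cap Y)+\mu(X\cap Y)+\rho(X\cup Y)+\mu(X\cup Y)\le\rho(X)+\mu(X)+\rho(Y)+\mu(Y)$, but (I) is available only for flats. The device is to pass to $F=\cl(X)$ and $G=\cl(Y)$. Using the closure identities --- in particular $\cl(X\cup Y)=\cl(F\cup G)$, which holds by monotonicity and idempotence of $\cl$, so that $\rho$ and $\mu$ are unchanged on the union-with-$e$ term, and the inclusion $\cl(X\cap Y)\subseteq F\cap G$, with $F\cap G$ a flat --- condition (II) applied to $\cl(X\cap Y)\subseteq F\cap G$ bounds $\rho(X\cap Y)+\mu(X\cap Y)$ above by $\rho(F\cap G)+\mu(F\cap G)$. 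Then (I) for $F,G$, after expanding $\delta(F,G)$, contributes precisely the slack among the $\rho$-terms needed to reach $\rho(F)+\mu(F)+\rho(G)+\mu(G)=\rho(X)+\mu(X)+\rho(Y)+\mu(Y)$. I expect this passage to the closures, together with the verification of the identity $\cl(X\cup Y)=\cl(\cl(X)\cup\cl(Y))$ and the awareness that $\cl(X\cap Y)$ need not equal $\cl(X)\cap\cl(Y)$, to be the only delicate point; the rest is routine rearrangement.
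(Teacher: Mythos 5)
Your proposal is correct and follows essentially the same route as the paper: both directions use the same substitutions (monotonicity on $F\cup e\subseteq G\cup e$ for (II), submodularity on $F\cup e,G$ for (III), and on $F\cup e,G\cup e$ for (I)), and the converse splits submodularity into the same two cases with $\mu$ transferred to closures via (II) and (III). The only cosmetic difference is in the final case, where the paper invokes the already-established monotonicity of $\bar{\rho}$ to pass from $\cl(X),\cl(Y)$ back to $X,Y$, while you use the identity $\cl(X\cup Y)=\cl(\cl(X)\cup\cl(Y))$ and apply (II) to $\cl(X\cap Y)\subseteq\cl(X)\cap\cl(Y)$ directly --- an equivalent bookkeeping choice.
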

\begin{proof}
Assume $(\bar{\rho}, S \cup e$) is a polymatroid,
and let $F, G$ be flats of $(\rho, S)$.
Applying the submodularity of $\bar{\rho}$ to the pair of sets $F \cup e$
and $G \cup e$ gives
\begin{equation*}
\bar{\rho}((F \cup e) \cap (G \cup e)) + \bar{\rho}((F \cup e) \cup (G \cup e))
\le \bar{\rho}(F \cup e) + \bar{\rho}(G \cup e).
\end{equation*}
By our definition of $\bar{\rho}$, the right side of the above
inequality equals $\rho(F) + \mu(F) + \rho(G) + \mu(G)$.  The
left side equals
\begin{equation*}
\begin{split}
\bar{\rho}((F \cap G) \cup e) + \bar{\rho}((F \cup G) \cup e) 
& = \rho(F \cap G) + \mu(F \cap G) + 
    \rho(F \cup G) + \mu(F \cup G) \\
& = \mu(F \cap G) + \mu(F \cup G) + \rho(F) + \rho(G) - \delta(F, G).
\end{split}
\end{equation*}

\noindent
We conclude that $\mu(F \cap G) + \mu(F \cup G) -\delta(F, G) 
\le \mu(F) + \mu(G)$ and see that
condition (I) is satisfied.

Statement (II) is the monotone property of $\bar{\rho}$.

Finally, to show condition (III), apply the submodularity of $\bar{\rho}$ to
the pair of sets $F \cup e$ and $G$.  This gives the first of the following
equivalent inequalities:
\begin{enumerate}
\item $\bar{\rho}((F \cup e) \cup G) + \bar{\rho}((F \cup e) \cap G) \le \bar{\rho}(F \cup e) + \bar{\rho}(G)$
\item $\bar{\rho}(G \cup e) + \bar{\rho}(F) \le \bar{\rho}(F \cup e) + \bar{\rho}(G)$
\item $\bar{\rho}(G \cup e) - \bar{\rho}(G) \le \bar{\rho}(F \cup e) - \bar{\rho}(F)$
\item $\mu(G) \le \mu(F)$.
\end{enumerate}

Now assume that conditions (I), (II), and (III) are satisfied.
We must verify that $\bar{\rho}$ satisfies the three axioms for a 
polymatroid.  It follows immediately from our definition that
$\bar{\rho}(\varnothing) = 0$.

Next, we check monotonicity.  Assume that $X \subseteq Y \subseteq S$.  
The definition of $\bar{\rho}$ and the monotonicity of $\rho$ imply that 
$\bar{\rho}(X) = \rho(X) \le \rho(Y) = \bar{\rho}(Y)$.
Thus we also get
$\bar{\rho}(X)\leq\bar{\rho}(Y)\leq\bar{\rho}(Y\cup e)$.
It remains to check that $\bar{\rho}(X \cup e) \le \bar{\rho}(Y \cup e)$.
Observe
\begin{equation*}
\begin{split}
\bar{\rho}(X \cup e) 
& = \rho(X) + \mu(X) \\
& = \rho(\cl(X)) + \mu(\cl(X)) \\
& \le \rho(\cl(Y)) + \mu(\cl(Y)) \qquad \text{(by condition (II))} \\
& = \rho(Y) + \mu(Y) \\
& = \bar{\rho}(Y \cup e). \\
\end{split}
\end{equation*}
Therefore, $\bar{\rho}$ is monotone on all subsets of $S \cup e$.

Since $\bar{\rho}(X) = \rho(X)$ for $X\subseteq S$, to check submodularity
it suffices to verify it for the pairs (a) $X\cup e$ and $Y$, 
and (b) $X\cup e$ and $Y\cup e$, with $X,Y\subseteq S$.  For case (a), we have
\begin{equation*}
\begin{split}
\bar{\rho}((X \cup e) \cap Y) + \bar{\rho}((X \cup e) \cup Y) 
& = \bar{\rho}(X \cap Y) + \bar{\rho}((X \cup Y) \cup e) \\
& = \rho(X \cap Y) + \rho(X \cup Y) + \mu(\cl(X \cup Y)) \\
& \le \rho(X) + \rho(Y) + \mu(\cl(X \cup Y)) \qquad \text{(by the submodularity of $\rho$)} \\
& \le \rho(X) + \rho(Y) + \mu(\cl(X)) \qquad \text{(by condition (III))} \\
& = \bar{\rho}(X \cup e) + \bar{\rho}(Y).
\end{split}
\end{equation*}
For case (b), we have
\begin{equation*}
\begin{split}
\bar{\rho}(X \cup e) + \bar{\rho}(Y \cup e) 
& = \rho(\cl(X)) + \mu(\cl(X)) + \rho(\cl(Y)) + \mu(\cl(Y)) \\
& \ge \mu(\cl(X) \cap \cl(Y)) + \mu(\cl(X) \cup \cl(Y)) - \delta(\cl(X), \cl(Y)) + \rho(\cl(X)) + \rho(\cl(Y))  \\
& = \mu(\cl(X) \cap \cl(Y)) + \mu(\cl(X) \cup \cl(Y)) + \rho(\cl(X) \cup \cl(Y)) + \rho(\cl(X) \cap \cl(Y)) \\
& = \bar{\rho}((\cl(X) \cup \cl(Y)) \cup e) + \bar{\rho}((\cl(X) \cap \cl(Y)) \cup e) \\
& \ge \bar{\rho}(X \cup Y \cup e) + \bar{\rho}((X \cap Y) \cup e).\\
\end{split}
\end{equation*}
\noindent
The first inequality follows by condition (I), and the last inequality holds 
because the monotonicity of $\bar{\rho}$ has already been established.
\end{proof}

Note that Theorem~\ref{thmB} generalizes Theorems~\ref{matroid_extA} and
\ref{matroid_extB} for single-element extensions
of matroids.  Also note that if the conditions of the
theorem are satisfied, then $\mathcal{M}_0$ is a modular cut.  Lastly, we point
out that the theorem remains true if the word ``flats'' is replaced by ``sets''
in its statement.  

\begin{defin}A partition $(\mathcal{M}_0, \mathcal{M}_1, \ldots, \mathcal{M}_c)$
of flats of a polymatroid $(\rho, S)$ that satisfies the conditions in
Theorem~\ref{thmB} is called an \emph{extensible partition.}
\end{defin}

For the remainder of this section, assume that
$(\rho, S)$ is a polymatroid, $(\mathcal{M}_0, \mathcal{M}_1, \ldots, \mathcal{M}_c)$
is an extensible partition, and $(\bar{\rho}, S \cup e)$ is the single-element
extension defined right before Theorem~\ref{thmB}.  Our next
goal is to describe the flats of $(\bar{\rho}, S \cup e)$.

Clearly if $F \subseteq S$ is a flat of $(\bar{\rho}, S \cup e)$, then $F$
is also a flat of $(\rho, S)$.  We also have the following helpful fact.

\begin{prop}For $F\subseteq S$, if $F\cup e$ is a flat of
$(\bar{\rho},S \cup e)$, then $F$ is a flat of $(\rho,S)$.
\end{prop}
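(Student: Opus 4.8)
The plan is to argue directly that $\cl_\rho(F) = F$. Since $F \subseteq \cl_\rho(F)$ always holds by the increasing property, everything reduces to the reverse inclusion $\cl_\rho(F) \subseteq F$, and the key tool linking the two closure operators is Proposition~\ref{propA}, which evaluates $\bar{\rho}$ on a set of the form $X \cup e$ in terms of the $\rho$-closure of $X$.

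First I would record the standard fact that for $z \in \cl_\rho(F)$ one has $\cl_\rho(F \cup z) = \cl_\rho(F)$: one inclusion follows from monotonicity of $\cl_\rho$ applied to $F \subseteq F \cup z$, and the other from $F \cup z \subseteq \cl_\rho(F)$ together with monotonicity and idempotence. Then, fixing an arbitrary $z \in \cl_\rho(F)$, I would compute $\bar{\rho}((F \cup e) \cup z) = \bar{\rho}((F \cup z) \cup e) = \bar{\rho}(\cl_\rho(F \cup z) \cup e) = \bar{\rho}(\cl_\rho(F) \cup e) = \bar{\rho}(F \cup e)$, where the second equality is Proposition~\ref{propA} applied to $X = F \cup z$, the third uses the fact just recorded, and the fourth is Proposition~\ref{propA} applied to $X = F$. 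Because $F \cup e$ is a flat of $\bar{\rho}$ and $z \in S \cup e$, this equality forces $z \in F \cup e$; since $z \in S$ and $e \notin S$, in fact $z \in F$. Hence $\cl_\rho(F) \subseteq F$, so $F$ is a flat of $(\rho, S)$.

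I do not expect a genuine obstacle: the argument is essentially a two-line application of Proposition~\ref{propA}. The only place demanding care is bookkeeping — keeping track that every closure appearing is taken with respect to $\rho$ rather than $\bar{\rho}$ when Proposition~\ref{propA} is invoked, and observing at the end that the element $z$ lies in $S$, so that membership in $F \cup e$ really yields membership in $F$.
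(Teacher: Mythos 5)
Your argument is correct. It proves the same containment the paper does -- that every $z\in\cl_\rho(F)$ must already lie in $F\cup e$, hence in $F$ -- but by a different route: you compute ranks element by element, using Proposition~\ref{propA} twice (together with the auxiliary fact $\cl_\rho(F\cup z)=\cl_\rho(F)$) to get $\bar{\rho}((F\cup e)\cup z)=\bar{\rho}(F\cup e)$, and then invoke the flatness of $F\cup e$. The paper dispenses with Proposition~\ref{propA} entirely and argues at the level of closure operators in one line: since $\bar{\rho}$ agrees with $\rho$ on subsets of $S$, one has $\cl_\rho(F)\subseteq\cl_{\bar{\rho}}(F)\subseteq\cl_{\bar{\rho}}(F\cup e)=F\cup e$, using only monotonicity of $\cl_{\bar{\rho}}$ and the hypothesis that $F\cup e$ is a flat. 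Your version trades that inclusion chain for a rank calculation; it is slightly heavier (you must justify $\cl_\rho(F\cup z)=\cl_\rho(F)$ and keep the two closures straight), but it has the small virtue of making explicit exactly which rank values coincide, and it reuses machinery (Proposition~\ref{propA}) already established for the extension construction. Either proof is acceptable; the paper's is the more economical of the two.
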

\begin{proof}
Observe that $\cl_\rho(F)\subseteq \cl_{\bar{\rho}}(F)\subseteq
\cl_{\bar{\rho}}(F\cup e)=F\cup e$.
\end{proof}

Therefore, to find the flats of $(\bar{\rho}, S \cup e)$ we need only
consider sets of the form $F$ and $F \cup e$, where $F$ is a flat of
$(\rho, S)$.  The next proposition explicitly describes the flats of
$(\bar{\rho}, S \cup e)$.

\begin{prop}Let $(\bar{\rho}, S \cup e)$ be the single-element extension
of $(\rho, S)$ corresponding to the extensible partition 
$(\mathcal{M}_0, \mathcal{M}_1, \ldots, \mathcal{M}_c)$.  The flats
of $(\bar{\rho}, S \cup e)$ are the sets
\begin{enumerate}
\item $F$ in $\mathcal{M}_i$, for $i>0$,
\item $F\cup e$, for $F\in\mathcal{M}_0$, 
\item $F\cup e$, for $F\in \mathcal{M}_i$ with $i>0$, where
$F$ has no cover $G$ with
$\rho(F) + \mu(F) = \rho(G) + \mu(G)$.
\end{enumerate}
\end{prop}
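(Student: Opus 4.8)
The plan is to build on the reduction already established: every flat of $(\bar\rho, S\cup e)$ is either a flat $F$ of $(\rho,S)$ or a set $F\cup e$ with $F$ a flat of $(\rho,S)$, so it suffices to decide, for each flat $F$ of $\rho$, whether $F$ is a flat of $\bar\rho$ and whether $F\cup e$ is a flat of $\bar\rho$. For the first question I would compute $\cl_{\bar\rho}(F)$: if $x\in S-F$ then $\bar\rho(F\cup x)=\rho(F\cup x)>\rho(F)=\bar\rho(F)$ because $F$ is a flat of $\rho$, so no such $x$ lies in $\cl_{\bar\rho}(F)$; hence $\cl_{\bar\rho}(F)=F$ precisely when $e\notin\cl_{\bar\rho}(F)$, that is, when $\bar\rho(F\cup e)=\rho(F)+\mu(F)$ exceeds $\rho(F)$, i.e. when $\mu(F)>0$. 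This gives the flats of type (1) and, incidentally, shows that when $F\in\mathcal{M}_0$ the set $F$ itself is not a flat of $\bar\rho$.

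For the second question I would again compute a closure. Since $F$ is a flat of $\rho$ we have $\bar\rho(F\cup e)=\rho(F)+\mu(F)$, and for $x\in S-F$ we have $\bar\rho(F\cup x\cup e)=\rho(\cl(F\cup x))+\mu(\cl(F\cup x))$ (recall $\mu$, like $\rho$, is constant on the closure of a set), which by condition (II) of Theorem~\ref{thmB} applied to the nested flats $F\subseteq\cl(F\cup x)$ is always at least $\rho(F)+\mu(F)$. Therefore $F\cup e$ is a flat of $\bar\rho$ if and only if $\rho(\cl(F\cup x))+\mu(\cl(F\cup x))>\rho(F)+\mu(F)$ for every $x\in S-F$, equivalently, if and only if this quantity never equals $\rho(F)+\mu(F)$. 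When $F\in\mathcal{M}_0$ this always holds, since then $\mu(F)=0$ while $\rho(\cl(F\cup x))=\rho(F\cup x)\ge\rho(F)+1$; this accounts for the flats of type (2).

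It remains to treat the case $F\in\mathcal{M}_i$ with $i>0$, and here the main obstacle is to show that ``$\rho(\cl(F\cup x))+\mu(\cl(F\cup x))=\rho(F)+\mu(F)$ for some $x\in S-F$'' is equivalent to ``$F$ has a cover $G$ with $\rho(G)+\mu(G)=\rho(F)+\mu(F)$''; granting this equivalence, the description of the flats of type (3) follows at once from the previous paragraph. One direction is straightforward: given such a cover $G$, pick $x\in G-F$; then $F\subsetneq\cl(F\cup x)\subseteq G$, and since $G$ covers $F$ this forces $\cl(F\cup x)=G$, so $x$ works. For the converse, a suitable $x$ shows that the family of flats $H$ with $F\subsetneq H$ and $\rho(H)+\mu(H)=\rho(F)+\mu(F)$ is nonempty; I would then take a minimal member $G_0$ of this finite family and argue it covers $F$, because any flat $H$ with $F\subsetneq H\subsetneq G_0$ would satisfy $\rho(F)+\mu(F)\le\rho(H)+\mu(H)\le\rho(G_0)+\mu(G_0)=\rho(F)+\mu(F)$ by two applications of condition (II), hence $\rho(H)+\mu(H)=\rho(F)+\mu(F)$, contradicting minimality. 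Assembling the three cases then yields exactly the list in the statement. I expect the only delicate point to be this cover equivalence; both directions rest on $\cl(F\cup x)$ being the smallest flat containing $F\cup x$, and the minimality argument genuinely needs condition (II) rather than merely the submodular inequality (I).
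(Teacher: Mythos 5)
Your proposal is correct and follows essentially the same route as the paper: reduce to sets $F$ and $F\cup e$ for flats $F$ of $(\rho,S)$, note that $F$ survives as a flat of $\bar\rho$ exactly when $\mu(F)>0$, that $F\cup e$ is always a flat when $F\in\mathcal{M}_0$, and settle the remaining case by whether condition (II) is strict over covers of $F$. The only difference is cosmetic: where you pass to the contrapositive and take a minimal flat above $F$ achieving equality of $\rho+\mu$ (showing it is a cover via two applications of (II)), the paper instead picks, for each $x\in S-F$, a cover $G$ with $F\subsetneq G\subseteq\cl(F\cup x)$ and applies the strict inequality directly.
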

\begin{proof}
To reiterate, we need only look at sets of the form $F$ and $F \cup e$,
where $F$ is a flat of $(\rho, S)$.

It follows from the
definition of $\bar{\rho}$ that a flat $F$ of $(\rho, S)$ is a flat of
$(\bar{\rho},S \cup e)$ if and only if $F \not\in \mathcal{M}_0$.

If $F\in\mathcal{M}_0$, then $F\cup e$ is a flat of $(\bar{\rho},S\cup
e)$ since, for $y\in S-F$, we have
$$\bar{\rho}(F\cup \{e, y\}) \geq \rho(F\cup y)>\rho(F)
=\bar{\rho}(F\cup e).$$

We claim that for $F\in \mathcal{M}_i$ with $i>0$, the set $F\cup e$
is a flat of $(\bar{\rho},S\cup e)$ if and only if the inequality in
property (II) of Theorem~\ref{thmB} is strict for all covers $G$ of $F$.
Indeed, if $G$ covers $F$ and $\rho(F) + \mu(F) = \rho(G) + \mu(G)$, then
$F\cup e$ is not a flat since
$$\bar{\rho}(F\cup e)= \rho(F)+\mu(F) =  \rho(G)+\mu(G)= 
\bar{\rho}(G\cup e).$$ Now assume strict inequality holds in property
(II) for all covers of $F$.  If $x\in S-F$, then there is a cover $G$
of $F$ with $F\subsetneq G\subseteq \cl_{\rho}(F\cup x)$,
so
\begin{equation*}
\bar{\rho}(F\cup e)= \rho(F)+\mu(F) <
\rho(G)+\mu(G)=\bar{\rho}(G\cup e) \leq \bar{\rho}(F\cup \{e,x\}).
\qedhere
\end{equation*}
\end{proof}

Note that if $\mu(G) = \mu(F)$, then equality cannot hold in
property (II) of Theorem~\ref{thmB}, since $\rho(G) > \rho(F)$.

These results generalize those for matroid extension.
We define the \emph{collar}
of $\mathcal{M}_i$ to consist of every $F \in \mathcal{M}_i$
that is covered by some $G \in \mathcal{M}_j$ with $j < i$.
In a matroid $(r, S)$,
if a flat $G$ covers a flat $F$, then $r(G) - r(F) = 1$.
If $(\bar{r}, S \cup e)$ is a single-element extension and
$F \in \mathcal{M}_1$, then $F \cup e$ is
a flat of $\bar{r}$ if and only if $F$ is not in the collar
of $\mathcal{M}_1$.

\section{Generating a Catalog of Small $2$-Polymatroids}

Now we will specialize the results of the previous section to
$2$-polymatroids.

Suppose $(\rho, S)$ is a $2$-polymatroid with collection of flats $\mathcal{F}(S)$.
Suppose that $\mathcal{F}(S)$ is the union of three disjoint 
sets, $\mathcal{M}_0$, $\mathcal{M}_1$, and $\mathcal{M}_2$, some of which may
be empty.  Let $e$ be an element not in $S$.  We define a
function $\bar{\rho} \colon 2^{S \cup e} \to \mathbb{N}$ as follows.
For $X \subseteq S$, define $\bar\rho(X) = \rho(X)$ and
$$\bar{\rho}(X \cup e) = 
\rho(X)+i \,\,\text{where}\,\, \cl(X)\in \mathcal{M}_i.$$

When computing the extensible partitions of a $2$-polymatroid, we found
it convenient to work with the following verbose specialization
of Theorem~\ref{thmB}.  

\begin{thm}\label{thmC}As defined, $(\bar\rho, S \cup e)$ is a $2$-polymatroid 
extension of $(\rho, S)$ if and only if the following seven
conditions are met.
\begin{enumerate}[(1)]
\addtolength{\itemsep}{5pt}
\item If $F \in \mathcal{M}_2$, $G \in \mathcal{F}(S)$, $F \subseteq G$, and
$\rho(G) - \rho(F) = 1$, then $G \in \mathcal{M}_1 \cup
\mathcal{M}_2$.  In other words, if $F \in \mathcal{M}_2$ is
covered by a flat $G$ of $(\rho, S)$ of one rank higher, then
$G$ cannot be in $\mathcal{M}_0$.
\item If $F, G \in \mathcal{M}_0$ and $(F, G)$ is a modular
pair, then $F \cap G \in \mathcal{M}_0$ as well.
\item If $F, G \in \mathcal{M}_0$ and $\rho(F) + \rho(G) = 
\rho(F \cup G) + \rho(F \cap G) + 1$, then $F \cap G \in \mathcal{M}_0
\cup \mathcal{M}_1$.
\item If $F, G \in \mathcal{M}_1$ and $(F, G)$ is a modular
pair, then either $F \cap G \in \mathcal{M}_1$ as well, or 
$F \cap G \in \mathcal{M}_2$ and $\cl(F \cup G) \in \mathcal{M}_0$.
\item If $F \in \mathcal{M}_0$, $G \in \mathcal{M}_1$, and $(F, G)$
is a modular pair, then $F \cap G$ cannot be in $\mathcal{M}_2$.
\item The set $\mathcal{M}_2$ is down-closed in the lattice $\mathcal{F}(\rho, S)$.
\item The set $\mathcal{M}_0$ is up-closed in the lattice $\mathcal{F}(\rho, S)$.
\end{enumerate}
\end{thm}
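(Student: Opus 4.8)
The plan is to obtain Theorem~\ref{thmC} as a direct translation of Theorem~\ref{thmB} into the case $c=2$, so that the only real work is the bookkeeping that converts the three inequalities (I), (II), (III) into the seven itemized conditions. First I would dispose of the phrase ``$2$-polymatroid'': since $\bar\rho(x)=\rho(x)\le 2$ for $x\in S$ and $\bar\rho(e)=\rho(\varnothing)+\mu(\cl(\varnothing))=\mu(\cl(\varnothing))\le 2$, the function $\bar\rho$ automatically takes value at most $2$ on every singleton as soon as it is a polymatroid at all. Hence ``$(\bar\rho,S\cup e)$ is a $2$-polymatroid extension of $(\rho,S)$'' is equivalent, by Theorem~\ref{thmB}, to the conjunction of (I), (II), (III), and it remains only to prove that (I)$\wedge$(II)$\wedge$(III) is equivalent to (1)$\wedge\cdots\wedge$(7).

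The two monotone-type conditions are handled quickly. Condition (III) says exactly that $F\subseteq G$ forces $\mu(G)\le\mu(F)$; with only the three classes $\mathcal{M}_0,\mathcal{M}_1,\mathcal{M}_2$ available, this is the same as asking that $\mathcal{M}_0$ be up-closed and $\mathcal{M}_2$ be down-closed, i.e.\ (6) and (7). For (II) I would use the standard fact that if $F\subsetneq G$ are flats then $\rho(F)<\rho(G)$ (otherwise any $x\in G-F$ would lie in $\cl(F)=F$). Granting this, $\rho(F)+\mu(F)\le\rho(G)+\mu(G)$ holds automatically whenever $\mu(F)\le 1$, and when $\mu(F)=2$ it can fail only if $\rho(G)=\rho(F)+1$ and $\mu(G)=0$; excluding exactly that possibility is condition (1). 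So (III)$\iff$(6)$\wedge$(7) and (II)$\iff$(1), the latter not even needing (III).

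The heart of the argument is showing, under the standing assumption of (III) (equivalently (6) and (7)), that (I) is equivalent to (2)$\wedge$(3)$\wedge$(4)$\wedge$(5). Fix flats $F,G$ and write $a=\mu(F)$, $b=\mu(G)$, $p=\mu(F\cap G)$, $q=\mu(\cl(F\cup G))=\mu(F\cup G)$, and $d=\delta(F,G)$, a nonnegative integer. Since $F\cap G\subseteq F,G$ and $F,G\subseteq\cl(F\cup G)$, condition (III) gives $p\ge\max(a,b)$ and $q\le\min(a,b)$, and all of $a,b,p,q$ lie in $\{0,1,2\}$. Condition (I) reads $p+q-a-b\le d$. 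I would now run through the unordered pair $\{a,b\}$: for $\{2,2\}$, $\{1,2\}$, $\{0,2\}$ the bounds on $p,q$ already force $p+q-a-b\le 0\le d$, so (I) holds automatically and no itemized condition is required; for $\{1,1\}$ the inequality amounts to ``$d=0\Rightarrow(p,q)\ne(2,1)$'', which unpacks precisely to (4); for $\{0,1\}$ it amounts to ``$d=0\Rightarrow p\ne 2$'', which is (5); and for $\{0,0\}$ it says ``$d\ge p$'', whose $d=0$ and $d=1$ instances are (2) and (3) respectively while $d\ge 2$ is vacuous. Assembling the six cases gives (I)$\iff$(2)$\wedge$(3)$\wedge$(4)$\wedge$(5) in the presence of (III); combining this with the previous paragraph yields the equivalence (I)$\wedge$(II)$\wedge$(III)$\iff$(1)$\wedge\cdots\wedge$(7), and hence the theorem.

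I expect the last paragraph to be the only delicate part: one must keep straight that $F\cup G$ need not be a flat (so $q$ is $\mu$ of its closure, which is what (4) refers to), remember that (I) is symmetric in $F$ and $G$ so only unordered pairs matter, and match the slightly asymmetric phrasings of (3), (4), (5) — for instance ``$F\cap G\in\mathcal{M}_2$ and $\cl(F\cup G)\in\mathcal{M}_0$'' — against the numerical inequalities on $p,q,d$. None of this is deep, but it is exactly where a dropped case or an off-by-one would alter the statement, so I would verify each of the six cases explicitly.
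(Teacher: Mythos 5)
Your proposal is correct and follows exactly the route the paper takes: the paper's proof is only a sketch asserting that (II) specializes to (1), (I) to (2)--(5), and (III) to (6) and (7), and your case analysis (including the observation that the $2$-polymatroid bound on $\bar\rho(e)$ is automatic and that (III) is needed as a standing hypothesis when matching (I) with (2)--(5)) is precisely the bookkeeping that sketch leaves to the reader.
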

\begin{proof}[Sketch of Proof] Condition (II) of Theorem~\ref{thmB}
specializes to condition (1) here, 
condition (I) to conditions (2) through (5), and
condition (III) to conditions (6) and (7).
\end{proof}

The flats of $(\bar{\rho},S\cup e)$ are the sets
\begin{enumerate}
\item $F$ in $\mathcal{M}_1\cup \mathcal{M}_2$,
\item $F\cup e$, for $F\in\mathcal{M}_0$, 
\item $F\cup e$, for $F\in \mathcal{M}_i$, with $i>0$, where,
\begin{enumerate}
\item $F$ has no cover $G$ in $\mathcal{M}_{i-1}$ with
  $\rho(G)=\rho(F)+1$, and
\item if $i=2$, $F$ has no cover $G$ in $\mathcal{M}_0$ with
  $\rho(G)=\rho(F)+2$.
\end{enumerate}
\end{enumerate}

For example, let
$(\rho, \{a, b\})$ be the $2$-polymatroid consisting of two lines
placed freely in a plane.  To be specific, define $\rho(\varnothing) = 0$,
$\rho(\{a\}) = \rho(\{b\}) =2$, and $\rho(\{a, b\}) = 3$. The single-element
extension corresponding to the extensible partition 
$$(\mathcal{M}_0, \mathcal{M}_1, \mathcal{M}_2) = 
(\{\{a, b\}\}, \{\{a\}, \{b\}\}, \{\varnothing\})$$ is the $2$-polymatroid
consisting of three lines placed freely in a plane.

Using the results of this section,
we endeavored to catalog all small $2$-polymatroids on a computer
by means of a canonical deletion algorithm.

\begin{defin}
Suppose $\mathcal{X}$ is a collection of combinatorial objects with
ground set $\{1, \ldots, n\}$ and a notion of isomorphism.  
A function $C \from \mathcal{X} \to \mathcal{X}$
is a \emph{canonical labeling} function if the following hold
for all $X, Y \in \mathcal{X}$:
\begin{enumerate}[(i)]
\item $X$ is isomorphic to $C(X)$, and
\item $C(X) = C(Y)$ if and only if $X$ is isomorphic to $Y$.
\end{enumerate}
In this case, $C(X)$ is called the
\emph{canonical representative} of $X$.
\end{defin}

Brendan McKay's \texttt{nauty} program efficiently computes canonically
labelings of colored graphs.  In order to make use of it, we convert
polymatroids into graphs using the following construction.

\begin{defin}
Given an integer polymatroid $(\rho, S)$, define a colored, bipartite graph
with bipartition $S$ and $\mathcal{F}(\rho, S)$.  An edge between $e \in S$ and 
$F \in \mathcal{F}(\rho, S)$ exists if and only if $e \in F$.
Color $F \in \mathcal{F}(\rho, S)$ with its rank, $\rho(F)$.
Color each $e \in S$ with $-1$.
Call the resulting graph the \emph{flat graph}
\footnote{In our implementation, we found it prudent to insert
an isolated vertex of rank $r$ if no flats of rank $r$ existed, for 
$r < \rho(S)$.  This made it easier to work with the labelings used by
\texttt{nauty}.}
of the integer polymatroid.
\end{defin}

Note that if $X \subseteq S$ and if $F$ is the smallest flat containing $X$,
then $\rho(X) = \rho(F)$.  In terms of the flat graph, the rank of a set
$X \subseteq S$ equals the least color amongst those vertices adjacent to every
element of $X$.  Using this observation, it is easy to prove the next 
proposition.

\begin{prop}Two integer polymatroids are isomorphic if and only if their
flat graphs are isomorphic as colored graphs. (By an isomorphism of a
colored graph, we mean a graph isomorphism that maps
each vertex to another of the same color.)
\end{prop}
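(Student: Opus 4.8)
The plan is to prove both directions of the biconditional using the characterization of ranks in terms of the flat graph that was just observed: for any $X \subseteq S$, $\rho(X)$ equals the least color among vertices adjacent to every element of $X$, where elements of $S$ carry color $-1$ and a flat $F$ carries color $\rho(F)$.

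First I would handle the easy direction. Suppose $\sigma \from S \to T$ is a polymatroid isomorphism from $(\rho, S)$ to $(\tau, T)$. I claim $\sigma$ induces a bijection $\mathcal{F}(\rho, S) \to \mathcal{F}(\tau, T)$ via $F \mapsto \sigma(F)$: since $\rho(X) = \tau(\sigma(X))$ for all $X$, one checks directly that $\cl_\rho(X) = X$ if and only if $\cl_\tau(\sigma(X)) = \sigma(X)$, using the definition of the closure operator. Then I would define a map on the vertex set of the flat graph of $(\rho,S)$ by sending $e \in S$ to $\sigma(e)$ and sending a flat $F$ to $\sigma(F)$. This is a bijection of vertex sets. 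It preserves adjacency because $e \in F \iff \sigma(e) \in \sigma(F)$, and non-adjacency similarly. It preserves colors because $S$-vertices all map to $S$-vertices (color $-1$) and a flat $F$ has color $\rho(F) = \tau(\sigma(F))$, which is exactly the color of $\sigma(F)$. (If the paper's footnote convention of inserting isolated dummy vertices of each missing rank is in force, note these are determined entirely by $\rho(S) = \tau(\sigma(S))$ and the set of realized flat-ranks, which $\sigma$ also preserves, so the extra vertices can be matched up by color.) Hence the flat graphs are isomorphic as colored graphs.

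For the converse, suppose $\phi$ is a color-preserving graph isomorphism between the two flat graphs. Since colors are preserved and the only vertices of color $-1$ are the ground-set elements, $\phi$ restricts to a bijection $\sigma \from S \to T$ and to a bijection between $\mathcal{F}(\rho,S)$ and $\mathcal{F}(\tau,T)$ (discarding any dummy vertices, which have no neighbors and so are harmless, or noting they're forced to correspond). I would then show $\sigma$ is a polymatroid isomorphism, i.e. $\rho(X) = \tau(\sigma(X))$ for every $X \subseteq S$. Fix $X$. By the rank characterization, $\rho(X)$ is the least color among flat-vertices adjacent to every element of $X$; equivalently, $\rho(X) = \rho(F)$ where $F$ is the smallest flat containing $X$. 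The set of flat-vertices adjacent to all of $X$ is carried by $\phi$ bijectively onto the set of flat-vertices adjacent to all of $\sigma(X)$, and $\phi$ preserves the colors (ranks) of these vertices; taking the minimum, $\rho(X) = \tau(\sigma(X))$. Since $X$ was arbitrary, $\sigma$ is an isomorphism of polymatroids.

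I do not expect a serious obstacle here — the statement is essentially a direct translation between two encodings of the same data. The one point that needs a little care is confirming that the color-preserving graph isomorphism really does respect the split between ground-set vertices and flat-vertices; this is immediate since $-1$ is used exclusively for ground-set elements and is distinct from every rank, and the bipartition of the flat graph is recovered from the coloring. A second minor point is making sure the dummy isolated vertices from the footnote don't spoil the correspondence; since they are isolated and colored by rank, and the list of ``missing'' ranks below $\rho(S)$ is an isomorphism invariant (it depends only on the rank function), they pose no difficulty. So the proof is just the two adjacency/color bookkeeping arguments sketched above, glued by the rank-from-neighbors observation.
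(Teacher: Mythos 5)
Your proof is correct and follows exactly the route the paper intends: the paper gives no detailed argument, merely noting that the rank of any $X\subseteq S$ can be read off the flat graph as the least color among vertices adjacent to all of $X$, which is precisely the observation you use for the converse direction (the forward direction being routine bookkeeping). Your added care about the color $-1$ ground-set vertices and the dummy isolated vertices from the footnote fills in details the paper leaves implicit, but the approach is the same.
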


Therefore, in order to canonically label a $2$-polymatroid, it suffices
to consider its flat graph.  Then \texttt{nauty} is
used to compute a canonical labeling of the flat graph.  When restricted to the
ground set of the polymatroid, this gives a canonical labeling of the polymatroid.
For a description of the algorithms used by \texttt{nauty} see \cite{pgi1} and \cite{pgi2}.  
One may also find the exposition in \cite{mckayexposition} helpful.

Now we have all the tools needed to adapt Algorithm 1 of \cite{nineels} to
$2$-polymatroids.  Suppose we are given a set $X_n$ that consists of precisely one
representative of each isomorphism class of $2$-polymatroids on the ground set
$\{1, \ldots, n\}$.  The following algorithm
produces its counterpart, $X_{n+1}$, for the set $\{1,\ldots,n+1\}$.

\begin{algorithm}[H]
\caption{Isomorph-free generation of $2$-polymatroids}
\label{kpolymalg1}
\begin{algorithmic}
    \FOR{\textbf{each} $\rho \in X_n$}
        \STATE Set $Y_{\rho} \gets \varnothing$, the collection of extensions
               of $\rho$ that should appear in $X_{n+1}$.
            \FOR{\textbf{each} extensible partition $(\mathcal{M}_0, \mathcal{M}_1, \mathcal{M}_2)$ of $\rho$}
                \STATE Let $\bar{\rho}$ be the extension of $\rho$ associated with this partition.
                \STATE Canonically label $\bar{\rho}$.
                \STATE Set $\rho' \gets \bar{\rho} \, \backslash (n+1)$, the canonical deletion.
                \STATE Canonically label $\rho'$.
                \IF {$\rho = \rho'$ \AND $\bar{\rho} \not\in Y_{\rho}$}
                      \STATE Set $Y_{\rho} \gets Y_{\rho} \cup \bar{\rho}$.
                \ENDIF
            \ENDFOR
        \STATE Set $X_{n+1} \gets X_{n+1} \cup Y_{\rho}$.
    \ENDFOR
    \RETURN $X_{n+1}$
\end{algorithmic}
\end{algorithm}

A few comments are in order.  Note that the test $\rho = \rho'$ is for equality,
not isomorphism.  In our implementation, the 
collections $Y_{\rho}$ are binary trees, rather than merely sets,
in order to speed up the search $\rho \in Y_{\rho}$.  

The task of finding all extensible partitions for a polymatroid $\rho$ is
relatively straightforward, but tedious.
First, a candidate for a modular cut $\mathcal{M}_0$
is found.  Since $\mathcal{M}_0$ is an up-closed set, it suffices to
keep track of the minimal flats in $\mathcal{M}_0$.  These are found as
independent sets of a graph with vertex set
equal to the flats of $\rho$.  If one flat is contained in another,
an edge is placed between the two.  Condition (2) of Theorem~\ref{thmC} is
then used to narrow the search. An edge is also placed between any two flats
that form a modular pair.
The independent sets in this graph are the minimal members of our candidates
for $\mathcal{M}_0$. Given an acceptable candidate for $\mathcal{M}_0$, a
more complicated procedure is used to find all possible candidates for $\mathcal{M}_1$.
The remaining flats are obviously assigned to $\mathcal{M}_2$.  Unfortunately,
the resulting partition must be checked to see if it satisfies conditions (1)
through (5), since some of these may fail for non-minimal members of $\mathcal{M}_0$
or $\mathcal{M}_1$.

Finally, note that each iteration of the outermost for loop may be run in
parallel since extensions of two different members of $X_n$ are never directly
compared to each other.

\section{Implementation and Results}

We implemented this algorithm in the C programming language.
In order to determine the cover relations for
flats, we employed the \texttt{ATLAS} library \cite{ATLAS} to multiply the adjacency
matrices of graphs. We used the \texttt{igraph} library \cite{igraph}
to find independent sets in graphs.  A computer
with a single 6-core Intel i7-3930K processor clocked at 3.20GHz 
running 64-bit Ubuntu Linux executed the resulting program.  After
approximately four days, a catalog of all $2$-polymatroids on seven or
fewer elements was generated.

The following table lists the number of $2$-polymatroids, up to isomorphism, on the ground set $\{1, \ldots, n\}$, by rank.

\begin{center}
\begin{tabular}{r||llllllll}
\multicolumn{1}{l}{} & \multicolumn{8}{c}{The number of unlabeled $2$-polymatroids} \\ 
\hline
rank $\backslash$ $n$ & 0 & 1 & 2 & 3 & 4 & 5 & 6 & 7 \\
\hline \hline
0   & 1 & 1 & 1 & 1  & 1   & 1     & 1     & 1             \\
1   &   & 1 & 2 & 3  & 4   & 5     & 6     & 7             \\
2   &   & 1 & 4 & 10 & 21  & 39    & 68    & 112           \\
3   &   &   & 2 & 12 & 49  & 172   & 573   & 1890          \\
4   &   &   & 1 & 10 & 78  & 584   & 5236  & 72205         \\
5   &   &   &   & 3  & 49  & 778   & 18033 & 971573        \\
6   &   &   &   & 1  & 21  & 584   & 46661 & 149636721     \\
7   &   &   &   &    & 4   & 172   & 18033 & 19498369      \\
8   &   &   &   &    & 1   & 39    & 5236  & 149636721     \\
9   &   &   &   &    &     & 5     & 573   & 971573        \\
10  &   &   &   &    &     & 1     & 68    & 72205         \\
11  &   &   &   &    &     &       & 6     & 1890          \\
12  &   &   &   &    &     &       & 1     & 112           \\
13  &   &   &   &    &     &       &       & 7             \\
14  &   &   &   &    &     &       &       & 1             \\
\hline
total & 1 & 3 & 10 & 40 & 228 & 2380 & 94495 & 320863387 \\
\hline
\end{tabular}
\end{center}

\bigskip
The following proposition is the key to producing the analogous table for
labeled $2$-polymatroids.
\begin{prop}The automorphisms of an integer polymatroid $(\rho, S)$ 
are in one-to-one correspondence with the automorphisms of its flat graph.
\end{prop}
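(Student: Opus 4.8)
The plan is to set up an explicit bijection between the automorphism group of $(\rho,S)$ and the automorphism group of its flat graph, and verify that it respects composition (so it is a group isomorphism, which is what ``one-to-one correspondence'' should mean here). First I would take an automorphism $\sigma$ of $(\rho,S)$, i.e.\ a permutation of $S$ with $\rho(X)=\rho(\sigma(X))$ for all $X\subseteq S$. I would define a map $\hat\sigma$ on the vertex set of the flat graph by letting $\hat\sigma$ act as $\sigma$ on the vertices coming from $S$, and by $\hat\sigma(F)=\sigma(F)$ on the vertices coming from $\mathcal{F}(\rho,S)$. The first thing to check is that $\hat\sigma$ is well defined: $\sigma(F)$ must again be a flat. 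This follows because $\cl_\rho$ is determined by $\rho$, so $\sigma(\cl_\rho(X))=\cl_\rho(\sigma(X))$ for all $X$ (verify from the definition $\cl_\rho(X)=\{x:\rho(X\cup x)=\rho(X)\}$ together with $\rho\circ\sigma=\rho$), and hence $\sigma$ permutes the fixed points of $\cl_\rho$, which are exactly the flats. Next, $\hat\sigma$ preserves colors: the $S$-vertices all keep color $-1$, and a flat $F$ has color $\rho(F)=\rho(\sigma(F))$, the color of $\hat\sigma(F)$. (Here I would remark that the isolated ``dummy'' rank-$r$ vertices of the footnote, if present, are simply fixed by $\hat\sigma$.) Finally $\hat\sigma$ preserves adjacency: $e\in F \iff \sigma(e)\in\sigma(F) \iff \hat\sigma(e)$ is adjacent to $\hat\sigma(F)$. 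So $\hat\sigma$ is a colored-graph automorphism, and the assignment $\sigma\mapsto\hat\sigma$ is clearly a group homomorphism, since composition on $S$ and composition on $\mathcal{F}(\rho,S)$ are both inherited from composition of $\sigma$'s.

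Injectivity is immediate: $\hat\sigma$ restricted to the $S$-vertices recovers $\sigma$, so $\hat\sigma=\widehat{\sigma'}$ forces $\sigma=\sigma'$. The main work is surjectivity, and this is the step I expect to be the real obstacle. Given a colored-graph automorphism $\tau$ of the flat graph, I want to show it arises as some $\hat\sigma$. Since $\tau$ preserves colors and the only vertices of color $-1$ are the $S$-vertices, $\tau$ maps $S$-vertices to $S$-vertices; let $\sigma=\tau|_S$, a permutation of $S$. The crux is to show $\rho(\sigma(X))=\rho(X)$ for all $X\subseteq S$, and the tool is the observation already stated in the excerpt: for any $X\subseteq S$, $\rho(X)$ equals the least color among the flat-vertices adjacent to every element of $X$. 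Because $\tau$ is a colored-graph automorphism, it maps the set of flat-vertices adjacent to all of $X$ bijectively onto the set of flat-vertices adjacent to all of $\sigma(X)$, preserving colors; hence the minimum color over the first set equals the minimum over the second, i.e.\ $\rho(X)=\rho(\sigma(X))$. (One caveat worth handling: in the un-padded flat graph, a given $X\subseteq S$ need not have a least flat-vertex that is itself a flat containing $X$ in the graph-theoretic sense unless we note that $\{x : \text{some flat adjacent to all of } X\}$ is nonempty because $\cl_\rho(X)$ is a flat containing $X$; the dummy vertices of the footnote do not interfere since they are adjacent to nothing. So the minimum is always attained and equals $\rho(\cl_\rho(X))=\rho(X)$.) Thus $\sigma$ is an automorphism of $(\rho,S)$.

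It remains to check that $\hat\sigma=\tau$, not merely that they agree on $S$. They agree on $S$-vertices by construction of $\sigma$. For a flat-vertex $F$, $\hat\sigma(F)=\sigma(F)$, which as a subset of $S$ equals $\{\tau(x):x\in F\}$; and since $\tau$ is a graph automorphism, $\tau(F)$ is the flat-vertex adjacent to exactly the $S$-vertices $\{\tau(x):x\in F\}$, which by the earlier remark that a polymatroid's flats are recoverable from the incidence data is precisely the flat $\sigma(F)$. (More carefully: two flats of the same polymatroid with the same element-set are equal, so the flat-vertex $\tau(F)$ is determined by its neighborhood among $S$-vertices, which is $\sigma(F)$.) Hence $\tau(F)=\hat\sigma(F)$ for every flat-vertex, and — again handling the footnote's dummy vertices, which $\tau$ must fix since for each $r$ there is at most one isolated vertex of color $r$ — we get $\tau=\hat\sigma$ on the whole vertex set. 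This proves surjectivity, and therefore $\sigma\mapsto\hat\sigma$ is a group isomorphism from $\mathrm{Aut}(\rho,S)$ onto the automorphism group of the flat graph.
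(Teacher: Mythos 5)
Your proof is correct, and it is genuinely more self-contained than what the paper does: the paper's ``proof'' is only a sketch that defers entirely to remarks of Miller on hypergraph isomorphism, whereas you construct the correspondence explicitly and verify it is a group isomorphism. Your map $\sigma\mapsto\hat\sigma$ (acting as $\sigma$ on the $S$-vertices and as $F\mapsto\sigma(F)$ on flat-vertices, using $\sigma(\cl_\rho(X))=\cl_\rho(\sigma(X))$ to see that flats go to flats) handles injectivity trivially, and your surjectivity argument is the real content: you recover $\sigma=\tau|_S$ from the color $-1$ class, recover ranks via the paper's own observation that $\rho(X)$ is the least color among flat-vertices adjacent to all of $X$ (noting this set is nonempty because $\cl_\rho(X)$, or $S$ itself, is such a flat), and then pin down $\tau$ on flat-vertices by the fact that a flat-vertex is determined by its $S$-neighborhood. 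This is essentially the same mechanism the paper uses implicitly for the earlier proposition that polymatroids are isomorphic iff their flat graphs are, specialized to the case of a single polymatroid; making it explicit here, including the observation that the footnote's isolated padding vertices are forced to be fixed because each is the unique isolated vertex of its color, is exactly the detail the paper omits by citation. What your version buys is a stronger and more precise statement (an isomorphism of automorphism groups, not just a bijection) proved from first principles within the paper's own framework; what the paper's citation buys is brevity and a pointer to the general hypergraph setting. No gaps.
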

\begin{proof}[Sketch of Proof]This is not hard to show.  It follows,
for example, from the remarks in Section 1 of \cite{millercs}, which
employs the language of hypergraphs.
\end{proof}

Since \texttt{nauty} can easily compute the automorphism groups of the 
flat graphs of these polymatroids, applying the Orbit-Stabilizer Theorem
gives a count of the number of labeled $2$-polymatroids on $7$ elements.
The following table lists the number of labeled $2$-polymatroids, on the
ground set $\{1, \ldots, n\}$, by rank.

\begin{center}
\begin{tabular}{r||llllllll}
\multicolumn{1}{l}{} & \multicolumn{8}{c}{The number of labeled $2$-polymatroids} \\ 
\hline
rank $\backslash$ $n$ & 0 & 1 & 2 & 3 & 4 & 5 & 6 & 7 \\
\hline \hline
0   & 1 & 1 & 1 & 1  & 1   & 1     & 1        & 1             \\
1   &   & 1 & 3 & 7  & 15  & 31    & 63       & 127           \\
2   &   & 1 & 6 & 29 & 135 & 642   & 3199     & 16879         \\
3   &   &   & 3 & 41 & 477 & 5957  & 87477    & 1604768       \\
4   &   &   & 1 & 29 & 784 & 27375 & 1554077  & 189213842     \\
5   &   &   &   & 7  & 477 & 41695 & 7109189  & 3559635761    \\
6   &   &   &   & 1  & 135 & 27375 & 21937982 & 733133160992  \\
7   &   &   &   &    & 15  & 5957  & 7109189  & 86322358307   \\
8   &   &   &   &    & 1   & 642   & 1554077  & 733133160992  \\
9   &   &   &   &    &     & 31    & 87477    & 3559635761    \\
10  &   &   &   &    &     & 1     & 3199     & 189213842     \\
11  &   &   &   &    &     &       & 63       & 1604768       \\
12  &   &   &   &    &     &       & 1        & 16879         \\
13  &   &   &   &    &     &       &          & 127           \\
14  &   &   &   &    &     &       &          & 1             \\
\hline
total & 1 & 3 & 14 & 115 & 2040 & 109707 & 39445994 & 1560089623047 \\
\hline
\end{tabular}
\end{center}

\bigskip

The symmetry of the columns in the above tables is explained by the following
notion of duality for $k$-polymatroids.
\begin{defin}
Given a polymatroid $(\rho, S)$, define the \emph{$k$-dual} 
$\rho^{*} \from 2^S \to \mathbb{N}$ by 
\begin{equation*}
\rho^{*}(X) = k|X| + \rho(S - X) - \rho(S).
\end{equation*}
\end{defin}
It is easily seen that $\rho^{*}$ is itself a $k$-polymatroid and that the
operation of $k$-duality is an involution on the set of $k$-polymatroids
on a fixed ground set which respects isomorphism.  (In fact, it is shown in 
\cite{whittlekdual} to be the the unique
such involution that interchanges deletion and contraction.)  

Welsh conjectured that the number of matroids
on a fixed set is unimodal in rank in \cite{welshcombprobs}. The 
counterpart of this conjecture for $k$-polymatroids is false.
The table above shows that it fails for $2$-polymatroids on $7$ elements.

Since the number of labeled $2$-polymatroids on seven elements is 
nearly a factor of $7!$ more than the number of unlabeled ones,
it seems reasonable to conjecture that, asymptotically, almost
all $2$-polymatroids are asymmetric. 

The proof in \cite{asymptoticconnected} that almost all matroids are loopless
carries over without change to $2$-polymatroids.  Our catalog suggests that 
a stronger property holds for $2$-polymatroids.  We conjecture that,
asymptotically, almost all $2$-polymatroids contain no elements of
rank less than $2$.  Here is the evidence from our catalog: 
the number of \emph{unlabeled} $2$-polymatroids on \{1, \ldots, $n\}$ 
with no elements of rank less than $2$.

\smallskip
\begin{center}
\begin{tabular}{r|lllllll}
\hline
$n$  & 1 & 2 & 3 & 4 & 5 & 6 & 7 \\
\hline
count & 1 & 2 & 8 & 51 & 696 & 49121 & 304541846 \\
\hline
\end{tabular}
\end{center}

\medskip
\noindent
This table should be compared to the first table in this section.

\smallskip

\section{A Confirmation}

Consider that the \emph{labeled} single-element extensions of a
$k$-polymatroid are in fact solutions to a certain integer programming problem. 
When all subsets of the ground set are taken as variables, inequalities guaranteeing
the axioms of a $k$-polymatroid are easily written.
To be concrete, let $\rho \from S \to \mathbb{N}$ be a $k$-polymatroid and
let $e$ be an element not in $S$.  Regard $\bar{\rho}(X)$ as a variable for 
each $X \subseteq S \cup e$. Fix $\bar{\rho}(A) = \rho(A)$ for 
$A \subseteq S$.  Also fix $\bar{\rho}(S\cup e) = \rho(S) + c$, where $c$ is
a natural number no greater than $k$.
Now nonnegative integer solutions to the system of inequalities below are
in one-to-one correspondence with labeled single-element extensions of $\rho$
which increase the rank of $\rho$ by $c$.

\begin{equation*}
\begin{split}
\bar{\rho}(A) + \bar{\rho}(A \cup f \cup g) \le \bar{\rho}(A \cup f) + \bar{\rho}(A \cup g)
& \quad \text{for} \: A \subseteq S \cup e \: \text{and} \: f, g \in (S \cup e) - A; \\
0 \le \bar{\rho}(A \cup f) - \bar{\rho}(A) \le k
& \quad \text{for} \: A \subseteq S \cup e \: \text{and} \: f \in (S \cup e) - A; 
\, \text{and} \\
\bar{\rho}(A) \le k|A| 
& \quad \text{for} \: A \subseteq S \cup e. \\
\end{split}
\end{equation*}

Here, we are using a condition equivalent to submodularity; see 
Theorem 44.2 of \cite{schrijverB} for a proof of equivalence.  The open-source optimization 
software SCIP \cite{SCIP} is able to count the number of integer solutions
to such inequalities.  Using SCIP we verified the numbers of labeled $2$-polymatroids 
given earlier.  Note that,
in the version of SCIP we used in April 2013, it was 
necessary to turn off all pre-solving options in order
to obtain accurate results.  This process took approximately 13 weeks
using the computer described in the previous section.

\smallskip

\section{Acknowledgements}
The author would like to thank the anonymous referee for carefully
reading this paper and providing suggestions which improved its exposition.

\smallskip

\bibliography{polymatroid_extensions}
\bibliographystyle{plain}

\end{document}